\newtheorem{theorem}{Theorem}
\newtheorem{lemma}{Lemma}
\begin{document}
\setcounter{page}{1}

\begin{center}
{\LARGE \bf  Consecutive square-free values of the form $\mathbf{[\alpha p], [\alpha p]+1}$}
\vspace{8mm}

{\large \bf S. I. Dimitrov}
\vspace{3mm}

Faculty of Applied Mathematics and Informatics, Technical University of Sofia \\
8, St.Kliment Ohridski Blvd. 1756 Sofia, BULGARIA \\
e-mail: \url{sdimitrov@tu-sofia.bg}
\vspace{2mm}
\end{center}
\vspace{10mm}

\noindent
{\bf Abstract:}
In this short paper we shall prove that there exist infinitely many consecutive
square-free numbers of the form $[\alpha p]$, $[\alpha p]+1$, where $p$ is prime
and $\alpha>0$ is irrational algebraic number.
We also establish an asymptotic formula for the number of such square-free pairs
when $p$ does not exceed given sufficiently large positive integer. \\
{\bf Keywords:} Consecutive square-free numbers, Asymptotic formula.\\
{\bf AMS Classification:} 11L05 $\cdot$ 11N25 $\cdot$  11N37.
\vspace{10mm}

\section{Notations}
\indent

Let $N$ be a sufficiently large positive integer.
The letter $p$  will always denote prime number.
By $\varepsilon$ we denote an arbitrary small positive number,
not necessarily the same in different occurrences. We denote by $\mu(n)$ the M\"{o}bius function and by $\tau(n)$
the number of positive divisors of $n$. As usual $[t]$ and $\{t\}$ denote the integer part, respectively, the
fractional part of $t$. Instead of $m\equiv n\,\pmod {k}$ we write for simplicity $m\equiv n\,(k)$.
Moreover $e(t)$=exp($2\pi it$). Let $\alpha>0$ be irrational algebraic number.
As usual $\pi(N)$ is the prime-counting function.

Denote
\begin{equation}\label{sigma}
\sigma=\prod\limits_{p}\left(1-\frac{2}{p^2}\right)\,.
\end{equation}

\section{Introduction and statement of the result}
\indent

In 1932 Carlitz \cite{Carlitz} proved that there exist infinitely many consecutive
square-free numbers. More precisely he established the asymptotic formula
\begin{equation}\label{Carlitz}
\sum\limits_{n\leq N}\mu^2(n)\mu^2(n+1)=\sigma N+\mathcal{O}\big(N^{\theta+\varepsilon}\big)\,,
\end{equation}
where $\sigma$ is denoted by \eqref{sigma} and $\theta=2/3$.

Subsequently the reminder term of \eqref{Carlitz} was improved by Mirsky \cite{Mirsky}
and Heath-Brown \cite{Heath-Brown}.
The best result up to now belongs to Reuss \cite{Reuss} with $\theta=(26+\sqrt{433})/81$.

In 2008  G\"{u}lo\u{g}lu and  Nevans \cite{Guloglu} showed by asymptotic formula that the sequence
\begin{equation}\label{sequence}
\{[\alpha n]\}_{n=1}^\infty
\end{equation}
contains infinitely many square-free numbers, where $\alpha>1$ is irrational number of finite type.

Recently Akbal \cite{Akbal} considered the sequence \eqref{sequence} with prime numbers
and proved the when $k\geq2$ and $\alpha>0$ is of type $\tau\geq1$, then there exist infinitely many
$k$-free numbers of the form $[\alpha p]$.
Akbal also established an asymptotic formula for the number of such $k$-free
numbers when $p$ does not exceed given sufficiently large real number $x$.

As a consequence of his result Akbal obtained the following
\begin{theorem}
Let $\alpha>0$ be an algebraic irrational number. Then
\begin{equation*}
\sum\limits_{p\leq N}\mu^2([\alpha p])
=\frac{6}{\pi^2}\pi(N)+\mathcal{O}\left(N^{\frac{9}{10}+\varepsilon}\right)\,.
\end{equation*}
\end{theorem}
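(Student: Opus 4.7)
The plan is to start from the standard identity $\mu^2(n)=\sum_{d^2\mid n}\mu(d)$ applied with $n=[\alpha p]$, which after swapping the order of summation gives
$$\sum_{p\le N}\mu^2([\alpha p])=\sum_{d\le\sqrt{\alpha N}}\mu(d)\,\#\{p\le N:d^2\mid[\alpha p]\}.$$
I would split this at a level $D$ (to be optimised at the end), treating $d\le D$ as the main range and $D<d\le\sqrt{\alpha N}$ as a tail. The crucial observation is that $d^2\mid[\alpha p]$ occurs precisely when the fractional part $\{\alpha p/d^2\}$ lies in $[0,1/d^2)$, so the inner counting function is an average of the indicator of a short subinterval of the unit circle, and is therefore ripe for Fourier-analytic treatment.

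For the main range $d\le D$, I would replace the indicator $\mathbf{1}_{\{\alpha p/d^2\}<1/d^2}$ by its Vaaler (or Erd\H{o}s--Tur\'an) approximation of length $H$. The zeroth Fourier coefficient produces the diagonal main term $d^{-2}\pi(N)$; summing $\mu(d)/d^2$ over $d\le D$ and completing the sum yields $\tfrac{6}{\pi^2}\pi(N)+\mathcal{O}\bigl(\pi(N)/D\bigr)$. The nonzero Fourier coefficients reduce the problem to bounding exponential sums of the shape
$$S_h(N,d)=\sum_{p\le N}e(h\alpha p/d^2),\qquad 1\le h\le H.$$
These are handled by Vinogradov's Type~I / Type~II decomposition for sums over primes, combined with the fact that, since $\alpha$ is algebraic irrational, Roth's theorem supplies $\|q\alpha\|\gg q^{-1-\varepsilon}$ for every positive integer $q$. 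This Diophantine input prevents $h\alpha/d^2$ from lying on a genuine major arc and delivers a power-saving bound for $S_h(N,d)$.

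For the tail $D<d\le\sqrt{\alpha N}$ I would estimate $\#\{p\le N:d^2\mid[\alpha p]\}$ through the same Fourier expansion, but here using a cruder bound, essentially $d^{-2}N$ plus the equidistribution remainder, which is sufficient once $D$ is not too small compared to $N$.

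The main obstacle is balancing the parameters $D$ and $H$ against the strength of the Vinogradov estimate for $S_h(N,d)$ in order to land on the claimed exponent $9/10+\varepsilon$. This optimisation is where the algebraicity of $\alpha$ enters decisively: Roth's theorem guarantees that $\alpha/d^2$ never admits an anomalously good rational approximation, so the Type~II sums admit a uniform power saving, and the error from the tail can be absorbed into $\mathcal{O}(N^{9/10+\varepsilon})$ for the correct choice of $D\asymp N^{\delta}$ and $H\asymp N^{\eta}$.
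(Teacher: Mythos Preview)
Your outline is correct and matches the underlying method. Note, though, that the paper does not actually prove this statement: it is quoted verbatim as Akbal's Corollary~1, and the ``proof'' is a bare citation. The argument you sketch is essentially Akbal's, and it is also the template the paper follows in proving its own main result (Theorem~2) on pairs $[\alpha p],[\alpha p]+1$: expand via $\mu^2(n)=\sum_{d^2\mid n}\mu(d)$, reinterpret the divisibility $d^2\mid[\alpha p]$ as a fractional-part condition, apply the Erd\H{o}s--Tur\'an inequality (Lemma~1), and feed the resulting exponential sums $\sum_{p\le N}e(h\alpha p/d^2)$ into a Vinogradov-type estimate whose Diophantine input is Roth's theorem (packaged in the paper as Lemma~2). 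The final parameter choice is $z=N^{1/10}$ for the splitting level and $H=N^{1/5}$ for the Fourier truncation.

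One point where your sketch is slightly more complicated than necessary: for the tail $d>D$ the paper (and Akbal) do \emph{not} run the Fourier argument again. They simply majorize the count over primes by the count over all integers,
\[
\#\{p\le N:d^2\mid[\alpha p]\}\le\#\{m\le\alpha N:d^2\mid m\}\ll N/d^2,
\]
and sum to get $O(N/D)$. Your proposed ``same Fourier expansion with a cruder bound'' would force you to control an extra term of size roughly $\sqrt{N}\cdot N/H$ coming from the Erd\H{o}s--Tur\'an remainder summed over all tail moduli, which unnecessarily couples the choice of $H$ to the tail. The trivial majorization avoids this entirely.
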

\begin{proof}
See (\cite{Akbal} , Corollary 1).
\end{proof}

In 2018 the author \cite{Dimitrov1} showed that for any fixed $1<c<22/13$
there exist infinitely many consecutive square-free numbers of the form $[n^c], [n^c]+1$.

Recently the author \cite{Dimitrov2} proved that there exist infinitely many
consecutive square-free numbers of the form $x^2+y^2+1$, $x^2+y^2+2$.

Also recently the author \cite{Dimitrov3} showed that there exist infinitely many
consecutive square-free numbers of the form $[\alpha n]$, $[\alpha n]+1$,
where $\alpha>1$ is irrational number with bounded partial quotient
or irrational algebraic number.

Define
\begin{equation}\label{SigmaNalpha}
\Sigma(N, \alpha)=\sum\limits_{p\leq N}\mu^2([\alpha p]) \mu^2([\alpha p]+1)\,.
\end{equation}
Motivated by these results and following the method of Akbal \cite{Akbal}
we shall prove the following theorem.
\begin{theorem}\label{Mytheorem}
Let $\alpha>0$  be  irrational algebraic number.
Then for the sum $\Sigma(N, \alpha)$  defined by \eqref{SigmaNalpha} the asymptotic formula
\begin{equation}\label{asymptoticformula}
\Sigma(N, \alpha)=\sigma \pi(N)+\mathcal{O}\left(N^{\frac{9}{10}+\varepsilon}\right)
\end{equation}
holds. Here $\sigma$ is defined by \eqref{sigma}.
\end{theorem}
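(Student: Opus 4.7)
The plan is to follow Akbal's method for $\sum_{p\le N}\mu^2([\alpha p])$ and adapt it to the two-factor indicator $\mu^2([\alpha p])\mu^2([\alpha p]+1)$. Expanding by the identity $\mu^2(n)=\sum_{d^2\mid n}\mu(d)$ in both factors gives
\begin{equation*}
\Sigma(N,\alpha)=\sum_{\substack{d_1,d_2\ge1\\ (d_1,d_2)=1}}\mu(d_1)\mu(d_2)\,S(N;d_1,d_2),
\end{equation*}
where $S(N;d_1,d_2)$ counts primes $p\le N$ satisfying both $d_1^2\mid[\alpha p]$ and $d_2^2\mid[\alpha p]+1$. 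The coprimality arises because otherwise the two congruences are incompatible. By the Chinese Remainder Theorem the combined condition becomes $[\alpha p]\equiv a\pmod{d_1^2d_2^2}$ for a unique residue $a=a(d_1,d_2)$, which is equivalent to the fractional-part condition
\begin{equation*}
\Bigl\{\frac{\alpha p-a}{d_1^2d_2^2}\Bigr\}\in\Bigl[0,\,\frac{1}{d_1^2d_2^2}\Bigr).
\end{equation*}

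Next I would split the sum at a truncation parameter $D$. In the tail $\max(d_1,d_2)>D$ the trivial estimate $\#\{p\le N:d^2\mid[\alpha p]\text{ or }d^2\mid[\alpha p]+1\}\ll N/d^2+1$ yields a contribution bounded by $N^{1+\varepsilon}/D$. In the main range $\max(d_1,d_2)\le D$ I would apply Vaaler's trigonometric approximation to the indicator of the short interval of length $1/(d_1^2d_2^2)$. Its constant term contributes
\begin{equation*}
\pi(N)\sum_{(d_1,d_2)=1}\frac{\mu(d_1)\mu(d_2)}{d_1^2d_2^2}=\sigma\pi(N),
\end{equation*}
after completing the sum to all coprime pairs (with an admissible error of the same shape as the tail), while the oscillating part produces exponential sums of the form
\begin{equation*}
\sum_{1\le|h|\le H}\frac{1}{|h|}\Bigl|\sum_{p\le N}e\Bigl(\frac{h\alpha p}{d_1^2d_2^2}\Bigr)\Bigr|.
\end{equation*}

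The key input is the Vinogradov--Vaughan bound for $\sum_{p\le N}e(\beta p)$ combined with the Liouville-type approximation lower bound $\|q\alpha\|\gg q^{-\kappa}$ valid for algebraic irrational $\alpha$; this is precisely the mechanism used by Akbal to treat the single-variable sum in the theorem cited above. Here the relevant denominator $d_1^2d_2^2$ is bounded by $D^4$, still polynomial in $D$, so the same mechanism yields a power-saving estimate uniform in $h$, $d_1$, $d_2$. Summing this estimate over $|h|\le H$ and $d_1,d_2\le D$ and balancing the result against the tail contribution $N^{1+\varepsilon}/D$ — with $H$ and $D$ chosen as in Akbal's argument — reproduces the error $\mathcal{O}(N^{9/10+\varepsilon})$. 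The main obstacle is thus the careful execution of this optimization and the verification that the squared modulus $d_1^2d_2^2$, rather than a single $d^2$, does not degrade the exponent; everything hinges on retaining Vinogradov's three-sum estimate with a modulus of polynomial size, which the algebraicity of $\alpha$ secures.
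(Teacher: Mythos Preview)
Your overall plan---expand both $\mu^2$ factors, reduce via CRT to a single congruence $[\alpha p]\equiv a\pmod{d_1^2d_2^2}$, approximate the interval indicator by exponentials, and invoke Akbal's Vinogradov--Vaughan estimate---matches the paper exactly. The paper uses the Erd\H{o}s--Tur\'{a}n inequality rather than Vaaler's polynomial, but that is cosmetic.

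There is, however, a concrete gap in the truncation. You split at $\max(d_1,d_2)\le D$ (a box), whereas the paper splits at $d_1d_2\le z$ (a hyperbola). The exponential-sum input (the analogue of Akbal's Lemma~3) controls
\[
\sum_{h,d_1,d_2}\frac{1}{h}\Bigl|\sum_{p\le N}e\Bigl(\frac{h\alpha p}{d_1^2d_2^2}\Bigr)\Bigr|
\ll N^\varepsilon\bigl((d_1d_2)^2N^{1/2}+(d_1d_2)N^{4/5}+(d_1d_2)^{3/2}N^{3/4}\bigr),
\]
so all error terms depend only on the \emph{product} $d_1d_2$. In the box $d_1,d_2\le D$ this product can reach $D^2$, while your tail only gives $N^{1+\varepsilon}/D$. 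Balancing the dominant term $D^2N^{4/5}$ against $N/D$ forces $D=N^{1/15}$ and yields an error $N^{14/15+\varepsilon}$, not $N^{9/10+\varepsilon}$. With the hyperbola $d_1d_2\le z$ the tail is $N^{1+\varepsilon}/z$ (since $\sum_{d_1d_2>z}(d_1d_2)^{-2}\ll z^{-1+\varepsilon}$) and the same error terms are $z^2N^{1/2}$, $zN^{4/5}$, $z^{3/2}N^{3/4}$; taking $z=N^{1/10}$ now balances at $N^{9/10}$. So your claim that ``the squared modulus $d_1^2d_2^2$ \dots\ does not degrade the exponent'' is false for the box truncation but true for the product truncation---switch to the latter and your argument goes through.
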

From Theorem \ref{Mytheorem} it follows that there exist infinitely many
consecutive square-free numbers of the form $[\alpha p]$, $[\alpha p]+1$,
where $p$ is prime and $\alpha>0$ is irrational algebraic number.

\newpage

\section{Lemmas}
\indent

\begin{lemma}\label{Erdos}\textbf{ (Erd\"{o}s-Tur\'{a}n inequality)}
Let  $\{t_k\}_{k=1}^K$  be a sequence of real numbers. Suppose that
$\mathcal{I}\subset[0,1)$ is an interval. Then
\begin{equation*}
\Big| \#\{ k\leq K\,:\,   \{t_k\}\in  \mathcal{I} \} - K|\mathcal{I}| \Big|
\ll \frac{K}{H}+\sum\limits_{h\leq H}\frac{1}{h}\left|\sum\limits_{k\leq K}e(ht_k)\right|
\end{equation*}
for any $H \gg1$. The constant in the $\mathcal{O}$-term is absolute.
\end{lemma}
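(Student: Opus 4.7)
The plan is to attack the discrepancy
\[
\#\{k \leq K : \{t_k\} \in \mathcal{I}\} - K|\mathcal{I}| = \sum_{k \leq K}\bigl(\chi_{\mathcal{I}}(\{t_k\}) - |\mathcal{I}|\bigr)
\]
by approximating the $1$-periodic indicator $\chi_{\mathcal{I}}(\{t\})$ from above and below by trigonometric polynomials of degree at most $H$. Since $\chi_{\mathcal{I}}$ is discontinuous, its own Fourier series is not absolutely convergent and cannot be manipulated directly; replacing it by a Beurling--Selberg type majorant/minorant is what transfers the oscillation of $\chi_{\mathcal{I}}$ onto the exponential sums $\sum_k e(h t_k)$ appearing on the right-hand side of the lemma.

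Concretely, the first step is to fix $H\geq 1$ and invoke the classical construction of trigonometric polynomials $S_H^{\pm}$ of degree $\leq H$ with the properties
\textbf{(a)} $S_H^-(t)\leq \chi_{\mathcal{I}}(\{t\})\leq S_H^+(t)$ for every $t\in\mathbb{R}$;
\textbf{(b)} $\widehat{S_H^{\pm}}(0)=|\mathcal{I}|+O(1/H)$;
\textbf{(c)} $|\widehat{S_H^{\pm}}(h)|\ll 1/|h|$ uniformly for $1\leq |h|\leq H$.
This is the one technical ingredient and is standard in the uniform-distribution literature (Vaaler's polynomials, or the Beurling--Selberg extremal construction); I would just cite it rather than reproduce the proof.

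The second step is to substitute and expand by Fourier inversion. From (a),
\[
\sum_{k\leq K}\chi_{\mathcal{I}}(\{t_k\})\;\leq\;\sum_{k\leq K}S_H^+(t_k)\;=\;K\,\widehat{S_H^+}(0)+\sum_{1\leq |h|\leq H}\widehat{S_H^+}(h)\sum_{k\leq K}e(ht_k).
\]
The constant term contributes $K|\mathcal{I}|+O(K/H)$ by (b). Pairing $h$ with $-h$ (noting $|\sum_k e(-ht_k)|=|\sum_k e(ht_k)|$) and using (c) bounds the remaining part by $\ll \sum_{1\leq h\leq H}\frac{1}{h}\bigl|\sum_{k\leq K}e(ht_k)\bigr|$. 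Repeating the argument with $S_H^-$ yields the matching lower bound, and combining the two one-sided inequalities gives the claim.

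The main (really only) obstacle is the availability of the majorants $S_H^{\pm}$ with the stated Fourier decay; once they are in hand, the rest is a one-line Fourier expansion. The cleanest exposition is therefore to state the three properties of $S_H^{\pm}$ explicitly, cite the Beurling--Selberg/Vaaler construction, and then carry out the expansion above.
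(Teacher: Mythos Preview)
Your argument is correct: the Beurling--Selberg/Vaaler majorant--minorant construction, followed by the one-line Fourier expansion you describe, is exactly the standard proof of the Erd\H{o}s--Tur\'{a}n inequality, and your handling of the constant term and the pairing of $h$ with $-h$ is fine.

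The paper, however, does not prove this lemma at all; its ``proof'' consists of the single citation ``See (\cite{Baker}, Theorem~2.1)'', i.e.\ it quotes the result from Baker's monograph. So the difference is not one of method but of scope: you have supplied an actual proof sketch where the paper is content to cite. Your sketch is precisely the argument that underlies the cited theorem, so there is no mathematical discrepancy---your version is simply more self-contained, at the cost of invoking the existence of the extremal polynomials $S_H^{\pm}$ (which, as you note, is itself a standard black box).
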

\begin{proof}
See (\cite{Baker}, Theorem 2.1).
\end{proof}

\begin{lemma}\label{Akbal-Lemma}
Suppose that $H, D, T, N \geq 1$.  Let $\alpha>0$ be  irrational algebraic number.
Then
\begin{align}\label{Dimitrov-est}
\sum\limits_{H<h\leq 2H}\sum\limits_{D<d\leq 2D}\sum\limits_{T<t\leq 2T}
\left|\sum\limits_{p\leq N}e\left(\frac{\alpha hp}{d^2t^2}\right)\right|
&\ll(H DT N)^\varepsilon \Big(H^{1/2} D^2 T^2 N^{1/2}+ H^{3/5} DT N^{4/5}\nonumber\\
&+H DT N^{3/4} + H^{3/4} D^{3/2} T^{3/2} N^{3/4} \Big)\,.
\end{align}
\end{lemma}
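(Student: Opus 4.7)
The plan is to apply Vaughan's identity to the innermost sum over primes $\sum_{p \le N} e(\alpha h p/(d^2 t^2))$ (for each fixed $(h,d,t)$ in the outer dyadic ranges), decomposing it, up to $\log^{O(1)} N$ factors, into bilinear Type I sums $\sum_{u \le U}\sum_{v} a_u \, e(\alpha h u v/(d^2 t^2))$ and Type II sums $\sum_{U < u \le V}\sum_v a_u b_v \, e(\alpha h u v/(d^2 t^2))$ with divisor-bounded coefficients and $UV \asymp N$. The outer summations over $h,d,t$ are then brought inside each piece, and each resulting multilinear sum is estimated separately; the four terms of \eqref{Dimitrov-est} will emerge after optimizing the Vaughan split and applying Cauchy--Schwarz in different variables.

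For the Type I contribution, evaluating the inner geometric progression and using $\left|\sum_{v \le V} e(\beta v)\right| \ll \min(V,\|\beta\|^{-1})$ reduces matters to bounding sums of the form $\sum_{h,d,t,u}\min\bigl(N/u,\|\alpha h u/(d^2 t^2)\|^{-1}\bigr)$. For the Type II contribution, Cauchy--Schwarz in (say) the variable $h$ followed by expansion of the square produces a diagonal contribution of size $H^{1/2} D^2 T^2 N^{1/2}$, matching the first term of \eqref{Dimitrov-est}, plus an off-diagonal piece of the same $\min$-sum shape. The essential analytic input is that the algebraic irrationality of $\alpha$ gives, via Roth's theorem, finite type $\tau = 1$: for every nonzero integer $q$, $\|q\alpha\|^{-1} \ll q^{1+\varepsilon}$. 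After clearing the denominator $d^2 t^2$ inside the Weyl norm, this translates, by standard Vinogradov-style counting of lattice points in short intervals modulo $1$, into the required bounds on the $\min$-sums.

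The main obstacle will be the careful choice of the Vaughan parameters $U,V$ to balance the various contributions against the outer ranges $H,D,T$, together with the tracking of the divisor losses $\tau(d^2 t^2)^{O(1)} \ll (DT)^\varepsilon$ arising when one clears the denominator before invoking Roth's theorem. One must also verify that the sub-cases where the relevant linear combination of $\alpha$ vanishes contribute no more than the diagonal term $H^{1/2} D^2 T^2 N^{1/2}$. Once the optimal parameter choice is made (essentially $V = N^{1/2}$ in some sub-regimes and $V = N^{2/5}$ in others), the dyadic sums align to give exactly the four contributions $H^{1/2}D^2T^2N^{1/2}$, $H^{3/5}DTN^{4/5}$, $HDTN^{3/4}$ and $H^{3/4}D^{3/2}T^{3/2}N^{3/4}$ claimed in \eqref{Dimitrov-est}, completing the proof along the lines of Akbal's argument in \cite{Akbal}.
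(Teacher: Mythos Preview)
Your sketch is correct and coincides with what the paper does: the paper gives no self-contained argument for this lemma but simply refers the reader to Akbal's Lemma~3, whose proof is precisely the Vaughan decomposition into Type~I/II sums, Cauchy--Schwarz, and the finite-type input from Roth's theorem that you outline. The only difference is that you have spelled out the strategy explicitly, whereas the paper leaves the entire argument to the citation.
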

\begin{proof}
This lemma is very similar to result of Akbal \cite{Akbal}.
Inspecting the arguments presented in (\cite{Akbal}, Lemma 3),
the reader will easily see that the proof of Lemma \ref{Akbal-Lemma}
can be obtained by the same manner.
\end{proof}

\section{Proof of the Theorem}
\indent

Assume
\begin{equation}\label{z}
2\leq z \leq(\alpha N)^{2/3}\,.
\end{equation}
We use \ \eqref{SigmaNalpha} and the well-known identity
$\mu^2(n)=\sum_{d^2|n}\mu(d)$ to write
\begin{align}\label{SNalphaest1}
\Sigma(N, \alpha)&=\sum\limits_{p\leq N}\mu^2([\alpha p])\mu^2([\alpha p]+1)
=\sum\limits_{p\leq N}\sum\limits_{d^2|[\alpha p]}\mu(d)
\sum\limits_{t^2|[\alpha p]+1}\mu(t)\nonumber\\
&=\sum\limits_{d, t \atop{(d,t)=1}}\mu(d)\mu(t)
\sum\limits_{p\leq N\atop{[\alpha p]\equiv0\,(d^2)\atop{[\alpha p]+1\equiv0\,(t^2)}}}1
=\Sigma_1(N)+\Sigma_2(N)\,,
\end{align}
where
\begin{equation}\label{SigmaN1}
\Sigma_1(N)=\sum\limits_{dt\leq z\atop{(d,t)=1}}\mu(d)\mu(t)
\sum\limits_{p\leq N\atop{[\alpha p]\equiv0\,(d^2)
\atop{[\alpha p]+1\equiv0\,(t^2)}}}1\,,
\end{equation}
\begin{equation}\label{SigmaN2}
\Sigma_2(N)=\sum\limits_{dt>z\atop{(d,t)=1}}\mu(d)\mu(t)
\sum\limits_{p\leq N\atop{[\alpha p]\equiv0\,(d^2)
\atop{[\alpha p]+1\equiv0\,(t^2)}}}1\,.
\end{equation}

\newpage

\textbf{Estimation of} $\mathbf{\Sigma_1(N)}$

From \eqref{SigmaN1} and Chinese remainder theorem we obtain
\begin{equation}\label{SigmaN1est1}
\Sigma_1(N)=\sum\limits_{dt\leq z\atop{(d,t)=1}}\mu(d)\mu(t)
\sum\limits_{p\leq N\atop{[\alpha p]\equiv q\,(d^2t^2)}}1\,,
\end{equation}
where $1\leq q\leq d^2t^2-1$.\\
It is easy to see that the congruence $[\alpha p]\equiv q\,(d^2t^2)$ is tantamount to
\begin{equation}\label{tantamount}
\frac{q}{d^2t^2}<\left\{\frac{\alpha p}{d^2t^2}\right\}<\frac{q+1}{d^2t^2}\,.
\end{equation}
Bearing in mind  \eqref{SigmaN1est1}, \eqref{tantamount} and Lemma \ref{Erdos} we get
\begin{align}\label{SigmaN1est2}
\Sigma_1(N)&=\pi(N)\sum\limits_{dt\leq z\atop{(d,t)=1}}\frac{\mu(d)\mu(t)}{d^2t^2}
+\mathcal{O}\left(\frac{N}{H}\sum\limits_{dt\leq z}1\right)
+\mathcal{O}\left(\sum\limits_{dt\leq z\atop{(d,t)=1}}
\sum\limits_{h\leq H}\frac{1}{h}
\left|\sum\limits_{p\leq N}e\left(\frac{\alpha hp}{d^2t^2}\right)\right|\right)\nonumber\\
=&\pi(N)\left(\sum\limits_{d,t=1\atop{(d,t)=1}}\frac{\mu(d)\mu(t)}{d^2t^2}
-\sum\limits_{dt>z\atop{(d,t)=1}}\frac{\mu(d)\mu(t)}{d^2t^2}\right)
+\mathcal{O}\left(\frac{N}{H}\sum\limits_{dt\leq z}1\right)\nonumber\\
&+\mathcal{O}\left(\sum\limits_{dt\leq z\atop{(d,t)=1}}
\sum\limits_{h\leq H}\frac{1}{h}
\left|\sum\limits_{p\leq N}e\left(\frac{\alpha hp}{d^2t^2}\right)\right|\right)\,.
\end{align}
It is well-known that
\begin{equation}\label{Sum-Product}
\sum\limits_{d,t=1\atop{(d,t)=1}}\frac{\mu(d)\mu(t)}{d^2t^2}
=\prod\limits_{p}\left(1-\frac{2}{p^2}\right)\,.
\end{equation}
On the other hand
\begin{equation}\label{Sumdtz1}
\sum\limits_{dt>z\atop{(d,t)=1}}\frac{\mu(d)\mu(t)}{d^2t^2}\ll\sum\limits_{dt>z}\frac{1}{d^2t^2}
=\sum\limits_{n>z}\frac{\tau(n)}{n^2}
\ll\sum\limits_{n>z}\frac{1}{n^{2-\varepsilon}}\ll z^{\varepsilon-1}\,.
\end{equation}
By the same way
\begin{equation}\label{Sumdtz2}
\sum\limits_{dt\leq z}1=\sum\limits_{n\leq z}\tau(n)\ll z^{1+\varepsilon}\,.
\end{equation}
From \eqref{SigmaN1est2} --  \eqref{Sumdtz2} it follows
\begin{equation}\label{SigmaN1est3}
\Sigma_1(N)=\sigma\pi(N)+\mathcal{O}\Big(\pi(N)z^{\varepsilon-1}\Big)
+\mathcal{O}\left(\frac{N}{H}z^{1+\varepsilon}\right)
+\mathcal{O}\left(\sum\limits_{dt\leq z}\sum\limits_{h\leq H}\frac{1}{h}
\left|\sum\limits_{p\leq N}e\left(\frac{\alpha hp}{d^2t^2}\right)\right|\right)\,,
\end{equation}
where $\sigma$ is denoted by \eqref{sigma}.\\
Splitting the range of $h$, $d$ and $t$ of the exponential sum in \eqref{SigmaN1est3}
into dyadic subintervals of the form $H<h\leq 2H$, $D<d\leq 2D$, $T<t\leq 2T$,
where $DT<z$ and applying Lemma \ref{Akbal-Lemma} we find
\begin{align}\label{Expsumest}
\sum\limits_{dt\leq z}\sum\limits_{h\leq H}\frac{1}{h}
\left|\sum\limits_{p\leq N}e\left(\frac{\alpha hp}{d^2t^2}\right)\right|
&\ll(H DT N)^\varepsilon \Big(D^2 T^2 N^{1/2}+  DT N^{4/5}+D^{3/2} T^{3/2} N^{3/4} \Big)\nonumber\\
&\ll(H z N)^\varepsilon \Big(z^2 N^{1/2}+  z N^{4/5}+z^{3/2} N^{3/4} \Big)\,.
\end{align}
Taking into account \eqref{z}, \eqref{SigmaN1est3}, \eqref{Expsumest} and choosing $H=N^{1/5}$ we obtain
\begin{equation}\label{SigmaN1est4}
\Sigma_1(N)=\sigma\pi(N)
+\mathcal{O}\bigg(N^\varepsilon \Big(z^2 N^{1/2}+  z N^{4/5}+z^{3/2} N^{3/4}+Nz^{-1} \Big) \bigg)\,.
\end{equation}
\textbf{Estimation of} $\mathbf{\Sigma_2(N)}$

By \eqref{z}, \eqref{SigmaN2}, \eqref{Sumdtz1} and Chinese remainder theorem we get
\begin{align}\label{SigmaN2est1}
\Sigma_2(N)&\ll\sum\limits_{dt>z}
\sum\limits_{n\leq N\atop{[\alpha n]\equiv0\,(d^2)
\atop{[\alpha n]+1\equiv0\,(t^2)}}}1
=\sum\limits_{dt>z}\sum\limits_{n\leq N\atop{[\alpha n]\equiv l\,(d^2t^2)}}1
\ll\sum\limits_{dt>z}\sum\limits_{m\leq [\alpha N]\atop{m\equiv l\,(d^2t^2)}}1\nonumber\\
&\ll N\sum\limits_{dt>z}\frac{1}{d^2t^2}\ll N^{1+\varepsilon}z^{-1}\,.
\end{align}
\textbf{The end of the proof}

Bearing in mind \eqref{SNalphaest1}, \eqref{SigmaN1est4}, \eqref{SigmaN2est1}
and choosing $z=N^{1/10}$ we establish the asymptotic formula \eqref{asymptoticformula}.

The theorem is proved.

\end{document}